\newtheorem{theorem}{Theorem}[section]
\newtheorem{proposition}[theorem]{Proposition}
\newtheorem*{claim*}{Claim}
\newtheorem{corollary}[theorem]{Corollary}
\newtheorem{Main Conjecture}[theorem]{Main Conjecture}
\newtheorem{conjecture}[theorem]{Conjecture}
\newtheorem{problem}[theorem]{Problem}
\theoremstyle{definition}
\theoremstyle{remark}
\newtheorem{example}[theorem]{Example}
\theoremstyle{plain}
\newcommand{\cellsize}{12}
\newlength{\cellsz} \setlength{\cellsz}{\cellsize\unitlength}
\newsavebox{\cell}
\sbox{\cell}{\begin{picture}(\cellsize,\cellsize)
\put(0,0){\line(1,0){\cellsize}}
\put(0,0){\line(0,1){\cellsize}}
\put(\cellsize,0){\line(0,1){\cellsize}}
\put(0,\cellsize){\line(1,0){\cellsize}}
\end{picture}}
\newcommand\cellify[1]{\def\thearg{#1}\def\nothing{}%
\ifx\thearg\nothing
\vrule width0pt height\cellsz depth0pt\else
\hbox to 0pt{\usebox{\cell} \hss}\fi%
\vbox to \cellsz{
\vss
\hbox to \cellsz{\hss$#1$\hss}
\vss}}
\newcommand\tableau[1]{\vtop{\let\\\cr
\baselineskip -16000pt \lineskiplimit 16000pt \lineskip 0pt
\ialign{&\cellify{##}\cr#1\crcr}}}
\newcommand{\excise}[1]{}
\begin{document}
\pagestyle{plain}
\title{Castelnuovo-Mumford regularity and Schubert geometry}
\author{Alexander Yong}
\address{Dept.~of Mathematics, U.~Illinois at Urbana-Champaign, Urbana, IL 61801, USA} 
\email{ayong@illinois.edu}
\date{February 12, 2022}

\maketitle
\begin{abstract}
We study  the Castelnuovo-Mumford regularity of tangent cones of Schubert varieties. Conjectures about this statistic are presented; these are proved for the \emph{covexillary} case. This builds on work of L.~Li and the author 
on these tangent cones,
as well as that of J.~Rajchgot-Y.~Ren-C.~Robichaux-A.~St.~Dizier-A.~Weigandt and 
J.~Rajchgot-C.~Robichaux-A.~Weigandt on tableau rules for computing regularity of some matrix Schubert varieties.
\end{abstract}

\section{Introduction}

Let $GL_n/B$ be the \emph{complete flag variety}; $GL_n$ is the group of $n\times n$ invertible complex matrices
and $B$ is the Borel subgroup of invertible upper triangular matrices. $B$ acts with finitely many orbits
$X_w^{\circ}=BwB/B\cong {\mathbb C}^{\ell(w)}$;  
$w\in {\mathfrak S}_n=$ the symmetric group on $[n]:=\{1,2,\ldots,n\}$ and $\ell(w)$ is the \emph{Coxeter length} of $w$, that is, $\ell(w)=\#\{i<j: w(i)>w(j)\}$. Their closures 
\[X_w:=\overline{X_w^{\circ}}=\coprod_{v\leq w} X_v^{\circ}\] 
are the \emph{Schubert varieties}; here $v\leq w$ refers to \emph{(strong) Bruhat order}. Let $T\subset GL_n$ be the maximal torus of 
invertible diagonal matrices. The $T$-fixed points are $e_v:=vB/B$. To study the local structure of $X_w$, it suffices to study only
the points $e_v$ (for $v\leq w$), since $B$ provides local isomorphisms to any other point of $X_{v}^\circ\subseteq X_w$. A book reference is \cite{Fulton:YT}.

Let $({\mathcal O}_{p,Y},{\mathfrak m}_p, {\Bbbk})$ be the local ring of a point $p$ in a variety $Y$. The \emph{associated graded ring} \cite[Chapter 10]{Atiyah} with respect to the ${\mathfrak m}_p$-adic filtration is
\[R_{p,Y}:={\rm gr}_{{\mathfrak m}_p}{\mathcal O}_{p,Y}=\bigoplus_{i=0}^{\infty} {\mathfrak m}_p^{i}/{\mathfrak m}_{p}^{i+1} \ \ \  \ \ ({\mathfrak m}_p^0:={\mathcal O}_{p,Y}).\]
$R_{p,Y}$ has a ${\mathbb Z}$-graded \emph{Poincar\'e series}
\begin{equation}
\label{eqn:firstPS}
{\sf PS}_{p,Y}(q)=\sum_{i=0}^{\infty} \dim({\mathfrak m}_p^{i}/{\mathfrak m}_{p}^{i+1}) q^i=\frac{H_{p,Y}(q)}{(1-q)^{\dim(Y)}},
\end{equation}
where $H_{p,Y}(q)\in {\mathbb Z}[q]$. $H_{p,Y}(1)$ is the \emph{Hilbert-Samuel multiplicity}. In the case 
$p=e_v$ and $Y=X_w$,
let ${\sf PS}_{v,w}(q)=P_{p,Y}(q), R_{v,w}=R_{p,Y}$, and $H_{v,w}(q)=H_{p,Y}(q)$.

We study the \emph{Castelnuovo-Mumford regularity} ${\rm Reg}(R_{v,w})$, viewed as a graded module over 
$\Bbbk[{\mathfrak m}_{e_v}/{\mathfrak m}_{e_v}^2]$. This statistic measures, in some sense, the ``complexity'' of $R_{v,w}$; see Section~\ref{sec:3} for definitions. Outside of Schubert geometry, study of regularity of the associated graded ring appears in, \emph{e.g.},
\cite{BrodLinh, Trung} and the references therein.

\begin{conjecture}
\label{conj:1}
${\rm Reg}(R_{v,w})=\deg H_{v,w}(q)$.
\end{conjecture}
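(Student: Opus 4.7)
The plan is to reduce Conjecture~\ref{conj:1} to the Cohen-Macaulayness of the tangent cone $R_{v,w}$. For any standard graded $\Bbbk$-algebra $R$ of Krull dimension $d$, writing its Hilbert series as $h_R(q)/(1-q)^d$, it is classical that if $R$ is Cohen-Macaulay then $\mathrm{Reg}(R) = \deg h_R(q)$: one specializes a linear system of parameters to reduce to an Artinian quotient whose regularity equals its socle degree, which is $\deg h_R$. Outside the Cohen-Macaulay setting, the regularity is typically strictly less than $\deg h_R$, so the conjectured equality essentially asserts that $R_{v,w}$ is itself Cohen-Macaulay. Note that Ramanathan's theorem gives Cohen-Macaulayness of the local ring $\mathcal{O}_{e_v, X_w}$, but this property need not descend to the associated graded $R_{v,w}$; this is what makes the conjecture nontrivial.

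My strategy for establishing Cohen-Macaulayness is to produce a Gr\"obner (or flat) degeneration of $R_{v,w}$ to the Stanley-Reisner ring of a shellable simplicial complex. Such a degeneration preserves the Hilbert series and therefore $\deg h_R(q)$, and Cohen-Macaulayness of the shellable degenerate ring transfers upward by upper-semicontinuity of depth. Two ingredients are available: (i) the Li-Yong explicit presentation of $R_{v,w}$ by tangent-cone equations, and (ii) Kazhdan's lemma, which identifies a slice of $X_w$ transverse to the $B$-orbit at $e_v$ with (an open piece of) a matrix Schubert variety. For matrix Schubert varieties, Knutson-Miller exhibited an antidiagonal Gr\"obner degeneration to the Stanley-Reisner ideal of a subword complex, which is shellable. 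I would transport this degeneration through the Kazhdan slice. Once the shellable initial complex is identified, the tableau regularity rules of Rajchgot-Ren-Robichaux-St.\,Dizier-Weigandt and Rajchgot-Robichaux-Weigandt should then match $\mathrm{Reg}(R_{v,w})$ with $\deg H_{v,w}(q)$, the latter being accessible through Li-Yong combinatorics.

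The principal obstacle is the Gr\"obner degeneration step in full generality. In the covexillary case, the Li-Yong equations realize $R_{v,w}$ essentially as a one-sided ladder determinantal ring whose antidiagonal initial ideal corresponds to a shellable (subword-type) complex; this is presumably the backbone of the covexillary proof. Outside covexillary, the Li-Yong equations are far more intricate, and it is not clear \emph{a priori} which monomial order---or even which class of combinatorial objects---plays the role of subword complexes for the general tangent cone. Formulating the correct combinatorial model for the initial ideal, proving shellability, and then matching the resulting regularity statistic term-by-term with the Li-Yong formula for $\deg H_{v,w}(q)$ all appear to require substantial new input; a fallback would be to attempt a case-by-case reduction along Bruhat-interval families where Li-Yong gives manageable presentations.
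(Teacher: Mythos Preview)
Your reduction to Cohen--Macaulayness of $R_{v,w}$ is exactly the paper's approach: the paper records the Cohen--Macaulayness hypothesis separately as Conjecture~\ref{conj:1'} and shows (Proposition~\ref{evenstronger}) that it implies Conjecture~\ref{conj:1} via the identity $\mathrm{Reg}(M)=\deg\mathcal{K}_M(q)$ valid for Cohen--Macaulay graded modules. Since Conjecture~\ref{conj:1} is open in general, there is no full proof to compare against; the paper only establishes the covexillary case (Theorem~\ref{thm:main}), and you correctly flag the general Gr\"obner step as the obstruction.

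For the covexillary case your outline lands on the right objects, but one step is misdescribed in a way that matters. You write that Kazhdan's lemma identifies the transverse slice with ``an open piece of a matrix Schubert variety'' and propose to transport the Knutson--Miller antidiagonal degeneration through that identification. This is not what happens: the Kazhdan--Lusztig variety $\mathcal{N}_{v,w}$ is \emph{not} a matrix Schubert variety (or an open piece of one), even when $w$ is covexillary, so there is no direct transport available. The actual mechanism, from \cite{LiYong1,LiYong2}, is that the \emph{tangent cone ideal} $I'_{v,w}$ Gr\"obner-degenerates, under a suitable term order, to the \emph{same} squarefree initial ideal as does the Schubert determinantal ideal $I_{\kappa(v,w)}$ for a \emph{different} covexillary permutation $\kappa(v,w)$ constructed combinatorially from the pair $(v,w)$. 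Shellability of that common limit then comes from \cite{KMY} (not Knutson--Miller alone), and Cohen--Macaulayness transfers back to $R'_{v,w}$ and hence to $R_{v,w}$. So the matrix Schubert variety enters through a coincidence of Gr\"obner limits rather than a geometric identification, and producing the permutation $\kappa(v,w)$ together with the matching of initial ideals is the substantive covexillary-specific input that your sketch glosses over.

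A minor point: the equality $\mathrm{Reg}=\deg h_R$ does not \emph{characterize} Cohen--Macaulayness, so the conjecture is not literally equivalent to $R_{v,w}$ being Cohen--Macaulay; Cohen--Macaulayness is simply the natural sufficient condition, which is why the paper separates it out as the stronger Conjecture~\ref{conj:1'}.
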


\begin{conjecture}[Semicontinuity]
\label{conj:2}
If $u\leq v\leq w$ in Bruhat order then ${\rm Reg}(R_{u,w})\geq {\rm Reg}(R_{v,w})$.
\end{conjecture}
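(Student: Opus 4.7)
My plan is to prove the inequality via a one-parameter degeneration combined with semicontinuity of regularity in families. By transitivity it suffices to treat a single Bruhat cover $u \lessdot v$ with $v \leq w$. Associated to the reflection $r = v u^{-1}$ is a $T$-stable Schubert curve $C \cong \mathbb{P}^1$ in $X_v \subseteq X_w$ whose two $T$-fixed points are $e_u$ and $e_v$ and whose generic point lies in the open $B$-orbit $X_v^\circ$. Parametrize the affine chart $C \setminus \{e_v\}$ as $\gamma: \mathbb{A}^1 \to C$ with $\gamma(0) = e_u$ and $\gamma(t) \in X_v^\circ$ for $t \neq 0$. Then $B$-equivariance of $X_w$ gives ${\mathcal O}_{\gamma(t), X_w} \cong {\mathcal O}_{e_v, X_w}$ as filtered local rings for each $t \neq 0$, and hence $R_{\gamma(t), X_w} \cong R_{v,w}$.

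Next, I would assemble the data $\{R_{\gamma(t),X_w}\}_{t \in \mathbb{A}^1}$ into a graded family over $\mathbb{A}^1$ and apply upper semicontinuity of Castelnuovo-Mumford regularity. A natural candidate is the normal cone $C_{\gamma/X_w} \to \mathbb{A}^1$, which is automatically flat, together with an identification of its fibers (up to $\operatorname{Proj}$) with the tangent cones at $\gamma(t)$. Mumford's classical theorem on semicontinuity of regularity in flat families---or the refinements for associated graded rings in the work of Brodmann-Linh and Trung cited in the introduction---would then yield ${\rm Reg}(R_{u,w}) \geq {\rm Reg}(R_{v,w})$.

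The main obstacle is the special fiber at $t=0$: when $e_u$ is singular in $X_w$, the Hilbert-Samuel function jumps upward at $e_u$, so the flat normal cone to the curve $\gamma$ need not coincide with the actual tangent cone $\operatorname{Spec}(R_{u,w})$. Overcoming this requires either constructing a more refined flat family whose special fiber is exactly $R_{u,w}$, or separately arguing that regularity can only increase under such Hilbert-function jumps. A complementary strategy is to first establish Conjecture \ref{conj:1}, thereby reducing Conjecture \ref{conj:2} to the combinatorial inequality $\deg H_{u,w}(q) \geq \deg H_{v,w}(q)$; in the covexillary case of this paper, the explicit tableau formulas for $H_{v,w}(q)$ make this degree-semicontinuity manifest, and one might hope that Kazhdan-Lusztig-theoretic combinatorics furnishes the analogous comparison in general.
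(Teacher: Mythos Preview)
This statement is labeled a \emph{conjecture} in the paper and is \emph{not} proved in general; the paper establishes it only when $w$ is covexillary (Theorem~\ref{thm:main}). So your proposal should be read as an attempt at the covexillary case, or as a strategy for the open general case.

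Your primary geometric approach---assembling tangent cones along a Schubert curve into a flat family and invoking semicontinuity of regularity---is \emph{not} what the paper does, and you have correctly put your finger on why it stalls: at the singular endpoint $e_u$ the fiber of the normal cone to the curve is not the tangent cone ${\rm Spec}(R_{u,w})$, and there is no evident way to repair this without already knowing something like Cohen--Macaulayness of $R_{u,w}$ or a separate comparison of regularities. The references you cite (Brodmann--Linh, Trung) concern regularity of associated graded rings of a fixed ideal, not semicontinuity across a moving family of tangent cones, so they do not close the gap either. In short, this route remains genuinely open.

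Your ``complementary strategy'' in the last paragraph is essentially the paper's actual argument in the covexillary case. The paper first proves Conjecture~\ref{conj:1} there: a Gr\"obner degeneration from \cite{LiYong1} shows ${\rm Spec}\,R_{v,w}'$ is Cohen--Macaulay, whence ${\rm Reg}(R_{v,w})=\deg H_{v,w}(q)$ via (\ref{therelation}). Conjecture~\ref{conj:2} then follows because \cite{LiYong2} already proved the coefficient-wise (hence degree-wise) semicontinuity of $H_{v,w}(q)$ for covexillary $w$; the explicit tableau formula (Theorem~\ref{thm:formula}) is not needed for this step. For the general case, the paper records the conditional statement (Proposition~\ref{evenstronger}) that the older Conjectures~\ref{conj:1'} and~\ref{conj:2'} would imply Conjecture~\ref{conj:2}, but offers no unconditional proof.
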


\begin{conjecture}[Upper bound]
\label{conj:3}
${\rm Reg}(R_{v,w})\leq \frac{\ell(w)-\ell(v)-1}{2}$.
\end{conjecture}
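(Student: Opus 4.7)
The natural plan is to route the argument through Conjecture~\ref{conj:1}: once ${\rm Reg}(R_{v,w}) = \deg H_{v,w}(q)$ is known for covexillary $w$, the assertion reduces to the numerical inequality $\deg H_{v,w}(q) \leq \frac{\ell(w)-\ell(v)-1}{2}$. Since ${\rm Reg}(R_{v,w})$ is a nonnegative integer, this is equivalent to $\deg H_{v,w}(q) \leq \lfloor (\ell(w)-\ell(v)-1)/2 \rfloor$. When $\ell(w)-\ell(v)=1$, normality of $X_w$ (singular locus of codimension $\geq 2$) forces smoothness at $e_v$, so $H_{v,w}(q)=1$ and the bound of $0$ holds with equality; the case $\ell(w)-\ell(v)=2$ is a base case where I would verify $\deg H_{v,w}(q) \leq 0$ directly.

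Next, I would obtain combinatorial control on $H_{v,w}(q)$ via the Li-Yong description of tangent cones in the covexillary setting. There, ${\rm Spec}\,R_{v,w}$ can be identified with (or Gr\"obner-degenerates to) a simpler variety whose $h$-polynomial enumerates a family of tableaux (or pipe dreams) by a nonnegative weight statistic, so that $\deg H_{v,w}(q)$ is the maximum value of this statistic. To produce the factor of $\frac{1}{2}$, I would exhibit a palindromic symmetry $H_{v,w}(q) = q^d H_{v,w}(1/q)$ with $d = \deg H_{v,w}(q)$, realized combinatorially as an involution on the indexing objects that pairs maximum-weight tableaux with minimum-weight ones; bounding the total available weight against $\ell(w)-\ell(v)$ then delivers the stated inequality.

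The main obstacle will be establishing this palindromic (or ``folded'') structure uniformly over all covexillary intervals $[v,w]$ rather than only for those where $R_{v,w}$ is strictly Gorenstein. A promising reduction is to push the problem to tangent cones at $T$-fixed points of Schubert varieties in a single Grassmannian (or product thereof) using the Li-Yong isomorphism, since $h$-polynomials and their symmetries there are accessible through the matrix Schubert combinatorics of Rajchgot-Ren-Robichaux-St.~Dizier-Weigandt and Rajchgot-Robichaux-Weigandt. The small-codimension boundary $\ell(w)-\ell(v)=2$ will likely also need direct verification, since the half-integer slack on the right-hand side leaves no room for error.
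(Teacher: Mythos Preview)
Your first step---reducing, via the covexillary case of Conjecture~\ref{conj:1}, to the numerical inequality $\deg H_{v,w}(q)\leq \frac{\ell(w)-\ell(v)-1}{2}$---is exactly what the paper does. From that point on, however, you miss the key lemma that makes the paper's argument essentially a one-liner: in the covexillary case, Li--Yong \cite{LiYong2} had already proved $\deg H_{v,w}(q)=\deg P_{v,w}(q)$, where $P_{v,w}$ is the Kazhdan--Lusztig polynomial. Since $\deg P_{v,w}(q)\leq \frac{\ell(w)-\ell(v)-1}{2}$ is part of the \emph{definition} of a Kazhdan--Lusztig polynomial, the inequality follows immediately. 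No palindromicity, no involutions, and no base-case analysis are needed.

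Your alternative route, by contrast, has a genuine gap. Even if you establish the palindromic identity $H_{v,w}(q)=q^{d}H_{v,w}(1/q)$ with $d=\deg H_{v,w}$, this only says the $h$-vector is symmetric about $d/2$; it does not by itself bound $d$ against $\frac{\ell(w)-\ell(v)-1}{2}$. Your sentence ``bounding the total available weight against $\ell(w)-\ell(v)$ then delivers the stated inequality'' hides exactly the step that needs proof: you would need, for each weight-reversing pair, that the two weights sum to at most $\ell(w)-\ell(v)-1$, and you give no mechanism for this. Moreover, palindromicity of $H_{v,w}$ is equivalent (given Cohen--Macaulayness) to Gorensteinness of $R_{v,w}$, which---as you yourself note---is not established uniformly for covexillary intervals; so the entire scaffold rests on a hypothesis stronger than what is available. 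The paper sidesteps all of this by invoking the Kazhdan--Lusztig connection.
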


Proposition~\ref{evenstronger} shows they follow from 
earlier conjectures with L.~Li \cite{LiYong1, LiYong2}; see Section~\ref{sec:5}.
Conjectures~\ref{conj:1} and~\ref{conj:2} imply that ${\rm Reg}(R_{u,v})$ is a singularity measure that 
falls into the framework of \cite{Governing}. In particular, it would imply the locus of points $p\in X_w$ with 
``${\rm Reg}(p)\geq k$'' is described using \emph{interval pattern avoidance}. 

Speculatively, a strengthening of Conjecture~\ref{conj:3} holds, namely, 
${\rm Reg}(R_{v,w})\leq \deg \, P_{v,w}(q)$ where $P_{v,w}(q)$ is the \emph{Kazhdan-Lusztig
polynomial}; but, the evidence is not strong ($n\leq 6$).

The papers \cite{LiYong1,LiYong2} study the tangent cones in the case $w$ is \emph{covexillary}, \emph{i.e.,} $w$ \emph{avoids} the pattern $3412$ (there are not indices
$i_1<i_2<i_3<i_4$ such that $w(i_1),w(i_2),w(i_3),w(i_4)$ are in the same relative order as $3412$). This defines a subfamily with a number of prior results. For example, \emph{ibid.}~gives formulas for $H_{v,w}(q)$
and related them to the \emph{Kazhdan-Lusztig polynomials}; a combinatorial formula for the latter was already known due to work
of A.~Lascoux \cite{Lascoux}. One also has a ``diagonal Gr\"obner basis theorem'' for \emph{matrix Schubert
varieties} \cite{KMY}.\footnote{Some of these results are stated for \emph{vexillary} rather than covexillary family; this is
a matter of convention.} These results play a role in our work. This is our main result:

\begin{theorem}
\label{thm:main}
Conjectures~\ref{conj:1},~\ref{conj:2}, and~\ref{conj:3} hold if $w$ is covexillary. In this case, there is a combinatorial rule for ${\rm Reg}(R_{v,w})$ (see Theorem~\ref{thm:formula}), and ${\sf Reg}(R_{v,w})=\deg \, P_{v,w}$.
\end{theorem}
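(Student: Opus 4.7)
The plan is to exploit the explicit combinatorial control available in the covexillary case on two fronts. Proposition~\ref{evenstronger} reduces Conjectures~\ref{conj:1}--\ref{conj:3} to structural conjectures on the tangent cone of $X_w$ at $e_v$ formulated with L.~Li; for covexillary $w$ those stronger conjectures should already be in hand from \cite{LiYong1, LiYong2}. The real task is then to produce a workable combinatorial description of $R_{v,w}$ and run the regularity tableau rules of Rajchgot--Ren--Robichaux--St.~Dizier--Weigandt and Rajchgot--Robichaux--Weigandt on the resulting object.

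First, I would extract $\deg H_{v,w}(q)$ by reading off the top degree in the covexillary Hilbert numerator formula of \cite{LiYong1, LiYong2}, obtaining a concrete combinatorial statistic (say, on flagged tableaux or non-intersecting lattice paths attached to the essential set of $w$). Second, I would identify $R_{v,w}$, up to Gr\"obner degeneration, with (the associated graded of) a matrix Schubert variety at an appropriate point; in the covexillary setting the diagonal Gr\"obner basis theorem of \cite{KMY} degenerates the defining ideal to a squarefree monomial ideal indexed by tableau combinatorics. Third, I would apply the Rajchgot et al.\ tableau rule to this model, producing a second combinatorial expression for ${\rm Reg}(R_{v,w})$. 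The content of Conjecture~\ref{conj:1} is then the equality of these two combinatorial statistics, which I expect to reduce to a top-degree comparison within the underlying tableau sums; this equality also supplies the combinatorial rule asserted in the theorem. The Kazhdan--Lusztig equality ${\sf Reg}(R_{v,w})=\deg P_{v,w}(q)$ should then follow by comparing with A.~Lascoux's combinatorial formula for $P_{v,w}$ in the covexillary case, whose top-degree statistic must match the one above.

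Conjectures~\ref{conj:2} and~\ref{conj:3} become combinatorial assertions: semicontinuity follows if replacing $v$ by $v'$ with $u\leq v\leq v'\leq w$ in Bruhat order shrinks the set of contributing tableaux, and the upper bound $(\ell(w)-\ell(v)-1)/2$ should come from the observation that $H_{v,w}(q)$ has total degree at most $\ell(w)-\ell(v)$ together with a palindromy-type estimate (reflecting a rational-smoothness feature visible in the covexillary Hilbert numerator) that cuts this in half. The main obstacle I anticipate lies in the second step: aligning the Rajchgot et al.\ tableau rule, which governs Castelnuovo--Mumford regularity of the \emph{global} coordinate ring of a matrix Schubert variety, with the \emph{local} regularity ${\rm Reg}(R_{v,w})$ of the associated graded ring at a single $T$-fixed point. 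This will require a careful verification that the diagonal Gr\"obner degeneration preserves both the Hilbert numerator and the regularity when passing between the matrix Schubert variety and the tangent cone $R_{v,w}$, so that the tableau rule can legitimately be transported to the local situation --- and this is where the covexillary hypothesis is likely to do the crucial work.
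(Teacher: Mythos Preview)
Your proposal identifies the right ingredients---the Gr\"obner degeneration from \cite{LiYong1} of the tangent cone to the initial scheme of a matrix Schubert variety for a new covexillary permutation $\kappa(v,w)$, Cohen--Macaulayness via the shellability of \cite{KMY}, and the tableau rule of \cite{RRW}---but the logical organization differs from the paper's and is less efficient in two places.

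First, you propose to establish Conjecture~\ref{conj:1} by computing ${\rm Reg}(R_{v,w})$ and $\deg H_{v,w}$ via two independent combinatorial routes and then matching the resulting statistics. The paper instead proves the equality \emph{structurally}: once $R'_{v,w}$ is shown to be Cohen--Macaulay (because it Gr\"obner degenerates to the shellable Stanley--Reisner scheme from \cite{KMY}), the general identity ${\rm Reg}(M)=\deg {\mathcal K}_M$ for Cohen--Macaulay graded modules yields ${\rm Reg}(R'_{v,w})=\deg H_{v,w}$ immediately, and this passes to $R_{v,w}$ via the affine-space factor in \eqref{eqn:KLfact}. No combinatorial matching is needed. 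The \cite{RRW} rule is then invoked only once, to evaluate $\deg H_{v,w}$ via the degree of the Grothendieck polynomial ${\mathfrak G}_{w_0\kappa(v,w)}$, not to compute regularity separately.

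Second, for the equality ${\rm Reg}(R_{v,w})=\deg P_{v,w}$ and for Conjecture~\ref{conj:3} you propose Lascoux's explicit formula and a ``palindromy-type estimate''. Both are harder than necessary: \cite[Theorem~1.2]{LiYong2} already gives $\deg H_{v,w}=\deg P_{v,w}$ directly in the covexillary case, and Conjecture~\ref{conj:3} is then simply the defining degree bound $\deg P_{v,w}\leq (\ell(w)-\ell(v)-1)/2$ for Kazhdan--Lusztig polynomials. Likewise, Conjecture~\ref{conj:2} follows from the semicontinuity of $H_{v,w}$ already established in \cite{LiYong2}, rather than from a fresh tableau argument. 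Your stated concern about transporting regularity from the global matrix Schubert variety to the local tangent cone is legitimate and is exactly what the Cohen--Macaulayness step resolves; once that is in place, the separate combinatorial comparison you envision becomes unnecessary.
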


Our proof of the first part of Theorem~\ref{thm:main} makes use of \cite{LiYong2}, which degenerates the tangent cone of the \emph{Kazhdan-Lusztig ideal}
${\mathcal N}_{v,w}$ to the Gr\"obner limit \cite{KMY} of the matrix Schubert variety ${\overline X}_{\kappa(v,w)}$ for a \emph{different} covexillary permutation $\kappa(v,w)$. Thereby, $H_{v,w}(q)$ can be expressed in terms of 
\emph{flagged Grothendieck polynomials} \cite{LS:groth, KMY}. We were inspired by the paper of J.~Rajchgot--Y.~Ren--C.~Robichaux--A.~St.~Dizier--A.~Weigandt \cite{Raj}, who determine the degree of a \emph{symmetric Grothendieck polynomial} to find the regularity of ${\overline X}_{w}$ when $w$ is \emph{Grassmannian} (has at most one descent). Recent work of J.~Rajchgot--C.~Robichaux--A.~Weigandt \cite{RRW}  extends that formula to vexillary permutations, which we apply (see also the generalization in O.~Pechenik-D.~Speyer-A.~Weigandt's \cite{PSW}, and E.~Hafner's \cite{Hafner}).

In Section~\ref{sec:2}, we recall the notion of Kazhdan-Lusztig ideals/varieties \cite{Governing}. We also recapitulate necessary
results about its tangent cone from \cite{LiYong1,LiYong2}. We summarize definitions and facts we need about
regularity in Section~\ref{sec:3}. We then prove our main result in Section~\ref{sec:4}. Final remarks are collected in Section~\ref{sec:5}.

\section{Kazhdan-Lusztig varieties}\label{sec:2}

Let $\Omega_v^{\circ}=B_{-}vB/B$ be the
\emph{opposite Schubert cell} where $B_{-}\subset GL_n$ consists of invertible lower triangular matrices. $\Omega_{id}^{\circ}$ is the \emph{opposite big cell}; it is an affine open neighborhood
of $(id)B/B$. Hence $v\Omega_{id}^{\circ}\cap X_w$ is an affine open neighborhood of $X_w$ centered at $e_v$. However, by \cite[Lemma~A.4]{KL},
\begin{equation}
\label{eqn:KLfact}
X_w\cap v\Omega_{id}^{\circ}\cong (X_w\cap \Omega_v^{\circ})\times {\mathbb A}^{\ell(w)}.
\end{equation}
Hence it suffices to study the \emph{Kazhdan-Lusztig variety} ${\mathcal N}_{v,w}:=X_w\cap \Omega_v^{\circ}$.

Explicit coordinates and equations for ${\mathcal N}_{v,w}$ were first studied in work with A.~Woo \cite{Governing}. Let ${\sf Mat}_{n\times n}$ be the set of 
all $n\times n$ complex matrices. The coordinate ring is ${\mathbb C}[{\bf z}]$ where ${\bf z}=\{z_{ij}\}_{i,j=1}^n$ are the
functions on the entries of a generic matrix $Z$. Here $z_{ij}$ corresponds to the entry in the $i$-th row
from the \emph{bottom}, and the $j$-th column to the right. 

Realize $\Omega_{v}^{\circ}$ as a affine subspace of ${\sf Mat}_{n\times n}$ consisting of matrices $Z^{(v)}$ where
$z_{n-v(i)+1,i}=1$, and $z_{n-v(i)+1,s}=0, z_{t,i}=0$ for $s>i$ and $t>n-v(i)+1$. Let ${\bf z}^{(v)}\subseteq {\bf z}$ 
be the unspecialized variables. Furthermore, let $Z_{st}^{(v)}$ be the southwest $s\times t$ submatrix of $Z^{(v)}$. The \emph{rank matrix} 
is
$r^w=(r_{ij}^w)_{i,j=1}^n$
(which we index in the same manner), where 
$r_{ij}^w=\#\{h: w(h)\geq n-i+1,h\leq j\}$. One combinatorial characterization of \emph{Bruhat order} is that $v\leq w$ if and only if
$r_{ij}^v\leq r_{ij}^w$ for all $1\leq i,j\leq n$.

The \emph{Kazhdan-Lusztig ideal} is 
$I_{v,w}\subset {\mathbb C}[z^{(v)}]$ 
generated by all $r_{st}^w+1$ minors
of $Z_{st}^{(v)}$ where $1\leq s,t\leq n$. As explained in \cite{Governing},
${\mathcal N}_{v,w}\cong {\rm Spec}\left({\mathbb C}[z^{(v)}]/I_{v,w}\right)$;
this is reduced and irreducible.

\begin{example}\label{exa:1}
Let $w=7314562$, $v=1423576$ (in one line notation). The rank matrix 
 $r^w$ and the matrix of variables $Z^{(v)}$ are, respectively,
 \[r^w=\left(\begin{matrix}
 1&2&3&4&5&6&7\\
 1&2&2&3&4&5&6\\
 1&2&2&3&4&5&5\\
 1&1&1&2&3&4&4\\
 1&1&1&1&2&3&3\\
 1&1&1&1&1&2&2\\
 1&1&1&1&1&1&1
 \end{matrix}\right), \ \ 
 Z^{(v)}=\left(\begin{matrix}
1 & 0 & 0 & 0 &0 &0 &0\\
z_{61} & 0 & 1 & 0 & 0 &0 &0\\
z_{51} & 0 & z_{53} & 1 & 0 &0 &0\\
z_{41} & 1 & 0 & 0 & 0 & 0 &0\\
z_{31} & z_{32} & z_{33} & z_{34} & 1 &0 &0\\
z_{21} & z_{22} & z_{23} & z_{24} & z_{25} &0 &1\\
z_{11} & z_{12} & z_{13} & z_{14} & z_{15} &1 &0\\
\end{matrix}\right)\]
The Kazhdan-Lusztig ideal $I_{1423576,7314562}$ contains among its generators, all $2\times 2$ minors
of $Z_{25}^{(v)}$ but also inhomogeneous elements such as 
\begin{equation}
\label{eqn:inhomog}
\left|\begin{matrix} z_{51} & 0 & z_{53}\\ z_{41}&1&0\\ z_{31}& z_{32} & z_{33}\end{matrix}\right|=z_{51}z_{33}+z_{53}z_{41}z_{32}-z_{53}z_{31}.
\end{equation}
This generator, \emph{per se}, does not imply $I_{1423576,7314562}$ is inhomogeneous; however one can confirm
the ideal is in fact inhomogeneous with respect to the standard grading using Macaulay2's function \texttt{isHomogeneous}. These ideals (and their statistics) can be computed using \url{https://faculty.math.illinois.edu/~ayong/Schubsingular.v0.2.m2}.\qed
\end{example}

We also need the \emph{Schubert determinantal ideal} $I_w$ which is defined similarly as $I_{v,w}$ except that we 
repace $Z^{(v)}$ with the matrix $Z=(z_{ij})$. The zero-set is the \emph{matrix Schubert variety}.

Given $f\in {\mathbb C}[z^{(v)}]$, let ${\sf LD}(f)$ denote the lowest degree homogeneous component of $f$.
Now, define the \emph{(Kazhdan-Lusztig) tangent cone ideal} to be
\[I_{v,w}'=\langle {\sf LD}(f):f\in I_{v,w}\rangle.\]
\emph{E.g.}, if $f$ is the polynomial in (\ref{eqn:inhomog}) then ${\sf LD}(f)=z_{51}z_{33}-z_{53}z_{31}$.
The \emph{tangent cone} of ${\mathcal N}_{v,w}$ is
\[{{\mathcal N}}_{v,w}':={\rm Spec}\left({\mathbb C}[z^{(v)}]/I_{v,w}'\right).\]
This can be computed using Macaulay2's \texttt{tangentCone} function.

\section{Castelnuovo-Mumford regularity basics}\label{sec:3}

The \emph{Castelnuovo-Mumford regularity} of a finitely generated graded module $M=\bigoplus_{j\in {\mathbb Z}}M^{(j)}$
over a standard ${\mathbb N}$-graded ring $S=\bigoplus_{j\geq 0} S^{(j)}$ is defined by
\[{\rm Reg}(M)=\max\{f_j(M)+j:j\geq 0\}\]
where
\[f_j(M):=\begin{cases}
\sup\{n:H_{S_+}^j(M)_n\neq 0\} & \text{if $H_{S_+}^j(M)\neq 0$,}\\
-\infty & \text{otherwise.}
\end{cases}\]
Here $S_+=\bigoplus_{j>0}S^{(j)}$ is the irrelevant ideal of $S$ and 
$H_{S_+}^i(M)$ is the $i$-th local cohomology module of $M$ with respect to $S_+$ (and its endowed grading).
 We refer the reader to the book \cite[Chapter~15]{Brod} for further details.
One has an expression for the Poincar\'e series
\begin{equation}
\label{eqn:easyreg1}
{\sf PS}_M(q)=\frac{{\mathcal K}_{M}(q)}{(1-q)^{\dim(M)}},
\end{equation}
where ${\mathcal K}_{M}(q)\in {\mathbb Z}[q]$; see, \emph{e.g.}, \cite[Corollary~4.1.8]{Bruns.Herzog}. Let $h_M(q)$ be \emph{Hilbert function}
and $p_M(q)$ be the \emph{Hilbert polynomial}. Hilbert's theorem states that $h_M(q)=p_M(q)$ for all sufficiently large $q$.
The \emph{postulation number} is
\[{\rm post}(M)=\max\{n: h_M(n)\neq p_M(n)\}.\]
By \cite[Proposition~4.1.12]{Bruns.Herzog},
\[{\rm post}(M)=\deg \, {\mathcal K}_{M}(q) -\dim \, M.\]

It is known (and not hard) that when $M$ is Cohen-Macaulay, ${\rm Reg}(M)={\rm post}(M)+\dim \, M$. Hence
\begin{equation}
\label{therelation}
{\rm Reg}(M)=\deg {\mathcal K}_{M}(q).
\end{equation}

Now suppose $S={\mathbb C}[x_1,\ldots,x_N]$ and $M=S/J$ is the $S$ module where $J\subseteq S$ is an ideal that is
standard graded homogeneous. $M=S/J$ has a minimal free resolution
\[0\to \bigoplus_j S(-j)^{\beta_{i,j}(S/J)}\to \bigoplus_j S(-j)^{\beta_{i-1,j}(S/J)}\to \cdots \to \bigoplus_j S(-j)^{\beta_{0,j}(S/J)}\to S/J\to 0.\]
Here $i\leq N$ and $S(-j)$ is the free $S$-module where degrees of $S$ are shifted by $j$.  Also,
\[{\rm Reg}(M):=\max\{j-i: \beta_{i,j}(M)\neq 0\},\]
and
\[{\sf PS}_{S/J}(q)=\frac{{\mathcal K}_{S/J}(q)}{(1-q)^{N}},\]
where ${\mathcal K}(S/J,q)\in {\mathbb Z}[q]$. If $S/J$ is Cohen-Macaulay, (\ref{therelation}) says
\begin{equation}
\label{eqn:easyreg2}
{\rm Reg}(S/J)=\deg {\mathcal K}(S/J,q)-{\rm ht}_S(J),
\end{equation}
where ${\rm ht}_S(J)$ is the \emph{height} of the ideal $J$ in $S$. In our application, the algebraic set $V(J)$ is radical and equidimensional; 
${\rm ht}_S(J)$ is the codimension of the variety $V(J)\subseteq {\mathbb C}^N$.

\begin{example}\label{exa:computed}
Continuing Example~\ref{exa:1}, using Macaulay2's \texttt{resolution} and \texttt{betti} one can compute the Betti numbers for the minimal free resolution of
$T_{1423576,7314562}$ as
\begin{verbatim}
                      0  1  2   3   4   5   6   7  8  9 10
               total: 1 12 61 176 322 392 322 176 61 12  1
                   0: 1  7 21  35  35  21   7   1  .  .  .
                   1: .  5 40 140 280 350 280 140 40  5  .
                   2: .  .  .   1   7  21  35  35 21  7  1
\end{verbatim}
In Macaulay2 format, the entry in row $j$ and column $i$ is $\beta_{i,i+j}$. So
${\rm Reg}({\mathbb C}^{(v)}/T_{1423576,7314562})=2$ is the largest row index of this table. Similarly one checks
that  ${\rm Reg}({\mathbb C}^{(v)}/T_{1234567,7314562})=3$, in agreement with Conjecture~\ref{conj:2}. 
\qed
\end{example}

\section{Proof of Theorem~\ref{thm:main}}\label{sec:4}

\subsection{Proof of Conjectures~\ref{conj:1}, \ref{conj:2}, \ref{conj:3} in the covexillary case} 
Let $R_{v,w}':={\mathbb C}[{\bf z}^{(v)}]/I_{v,w}'$. We claim
\begin{equation}
\label{eqn:firstargue}
{\rm Reg}(R_{v,w}')=\deg H_{v,w}.
\end{equation}

By \cite[Theorems~3.1 and~5.5]{LiYong1}, ${\rm Spec} \, R_{v,w}'$ Gr\"obner degenerates to
${\rm init}_{\prec} {\overline X}_{\kappa(v,w)}$ (up to a permutation of coordinates), the Gr\"obner limit in \cite{KMY} of a matrix Schubert variety
 ${\overline X}_{\kappa(v,w)}$ of the covexillary permutation $\kappa(v,w)$. We will define $\kappa(v,w)$ in Section~\ref{sec:4.2}.
  At this moment, it suffices to know that ${\rm init}_{\prec} {\overline X}_{\kappa(v,w)}$ is a reduced union of coordinate subspaces, whose
 associated Stanley-Reisner simplicial complex is homeomorphic to a shellable ball or sphere \cite[Theorem~4.4]{KMY}. Shellable simplicial complexes
 are Cohen-Macaulay, which by definition, means the said union of coordinate subspaces is Cohen-Macaulay \cite[Section~13.5.3]{Miller.Sturmfels}.
  Therefore
 ${\overline X}_{\kappa(v,w)}$ is Cohen-Macaulay, and hence ${\rm Spec} \, R_{v,w}'$ is also Cohen-Macaulay as it
 also Gr\"obner degenerates to it \cite[Section~15.8]{Eisenbud}. 
 
 In \cite{LiYong1}, one has
 \[{\mathcal K}(R_{v,w}',q)=\frac{H_{v,w}(q)(1-q)^{\ell(w_0w)}}{(1-q)^{\ell(w_0v)}}.\]
 Thus by (\ref{eqn:easyreg2}), ${\rm Reg}(R_{v,w}')=\deg \, H_{w,v}(q) + \ell(w_0w)-\ell(w_0w)$, since ${\rm ht}_{{\mathbb C}[{\bf z}^{(v)}]} {I}_{v,w}'=\ell(w_0w)$ (here we use the fact that the tangent cone of ${\mathcal N}_{v,w}$ has the same dimension as ${\mathcal N}_{v,w}$ itself,
 namely ${\ell}(w)-\ell(v)$, and that ). Thus (\ref{eqn:firstargue}) holds.
 
 Since the tangent cone of ${\mathcal N}_{v,w}$ is ${\rm Spec} \, R'_{v,w}$ it follows from (\ref{eqn:KLfact}) that
 \[\text{tangent cone ($v\Omega_{id}^{\circ}\cap X_w$)}\cong {\rm Spec} \, R'_{v,w}\times {\mathbb A}^{\ell(v)}.\] 
The tangent cone of any affine open neighborhood of $p\in Y$ 
 is isomorphic to $R_{p,Y}$; see, \emph{e.g.}, \cite[Section~5.4]{Eisenbud} and
 \cite[III.3]{Mumford}. Hence the Cohen-Macaulayness of $R'_{v,w}$ implies the same of $R_{v,w}$, since this property
 of an affine variety is preserved under cartesian product with affine space.
  Hence Conjecture~\ref{conj:1} holds in this case by (\ref{eqn:easyreg1}).
 
 Conjecture~\ref{conj:2} holds in our case since it is shown in \cite{LiYong2} that $H_{v,w}(q)$ is semicontinuous. Also, in the covexillary
 case, one has from \emph{ibid.} that $\deg H_{v,w}(q)=\deg P_{v,w}(q)$ where $P_{v,w}(q)$ is the \emph{Kazhdan-Lusztig polynomial}.
 By definition $\deg P_{v,w}(q)\leq \frac{\ell(w)-\ell(v)-1}{2}$; this is Conjecture~\ref{conj:3}.

\subsection{Permutation combinatorics and the formula}\label{sec:4.2}
We recall some standard permutation combinatorics; our reference is
\cite{Manivel} (although our conventions are upside down from theirs). The \emph{graph} of $w\in {\mathfrak S}_n$ places a $\bullet$ in position $(w(i),i)$ (written in matrix notation). Cross out all boxes weakly right and weakly above a $\bullet$; the remaining boxes of $[n]\times [n]$ form the \emph{Rothe diagram} of $w$, denoted $D(w)$. That is,
\[D(w)=\{(i,j)\in [n]\times [n]: i>w(j), j<w^{-1}(i)\}.\]
The vector ${\sf code}(w)=(c_{n},c_{n-1},\ldots,c_1)$ where $c_i$ is the number boxes of $D(w)$ in row $i$.
The \emph{essential set} $E(w)$ of $w$ consists of those maximally northeast boxes of any connected component of $D(w)$, \emph{i.e.},
\[E(w)=\{(i,j)\in D(w): (i-1,j), (i,j+1)\not\in D(w)\}.\]

\begin{example} 
Continuing our running example, where $w=7314562$, diagram is graphically depicted in Figure~\ref{fig:1}. Hence
\[D(w)=\{(2,3),(4,2),(4,3),(5,2),(5,3),(5,4),(6,2),(6,3),(6,4)\}\]
and
\[E(w)=\{{\mathfrak e}_1=(6,5),{\mathfrak e}_2=(5,4),
{\mathfrak e}_3=(4,2), {\mathfrak e}_4=(2,3)\}.\]
Moreover, ${\sf code}(w)=(0,4,3,2,0,1,0)$.

\begin{figure}[h]\label{figure1}
\begin{picture}(180,105)
\put(-10,50){$D(w)=$}
\put(52.5,15){\line(1,0){60}}
\put(52.5,30){\line(1,0){60}}
\put(52.5,15){\line(0,1){15}}
\put(67.5,15){\line(0,1){15}}
\put(82.5,15){\line(0,1){15}}
\put(97.5,15){\line(0,1){15}}
\put(112.5,15){\line(0,1){15}}
\put(100,20){${\mathfrak e}_1$}
\put(52.5,30){\line(1,0){45}}
\put(52.5,45){\line(1,0){45}}
\put(52.5,30){\line(0,1){15}}
\put(67.5,30){\line(0,1){15}}
\put(82.5,30){\line(0,1){15}}
\put(97.5,30){\line(0,1){15}}
\put(85,35){${\mathfrak e}_2$}
\put(52.5,60){\line(1,0){30}}
\put(52.5,45){\line(0,1){15}}
\put(67.5,45){\line(0,1){15}}
\put(82.5,45){\line(0,1){15}}
\put(70,50){${\mathfrak e}_3$}
\put(67.5,75){\line(1,0){15}}
\put(67.5,90){\line(1,0){15}}
\put(67.5,75){\line(0,1){15}}
\put(82.5,75){\line(0,1){15}}
\put(70,80){${\mathfrak e}_4$}
\thicklines
\put(45,7.5){\circle*{4}}
\put(45,7.5){\line(1,0){97.5}}
\put(45,7.5){\line(0,1){97.5}}
\put(60,67.5){\circle*{4}}
\put(60,67.5){\line(1,0){82.5}}
\put(60,67.5){\line(0,1){37.5}}
\put(75,97.5){\circle*{4}}
\put(75,97.5){\line(1,0){67.5}}
\put(75,97.5){\line(0,1){7.5}}
\put(90,52.5){\circle*{4}}
\put(90,52.5){\line(1,0){52.5}}
\put(90,52.5){\line(0,1){52.5}}
\put(105,37.5){\circle*{4}}
\put(105,37.5){\line(1,0){37.5}}
\put(105,37.5){\line(0,1){67.5}}
\put(120,22.5){\circle*{4}}
\put(120,22.5){\line(1,0){22.5}}
\put(120,22.5){\line(0,1){82.5}}
\put(135,82.5){\circle*{4}}
\put(135,82.5){\line(1,0){7.5}}
\put(135,82.5){\line(0,1){22.5}}
\end{picture}
\caption{The diagram and essential set for $w=7314562$.}
\label{fig:1}
\end{figure}
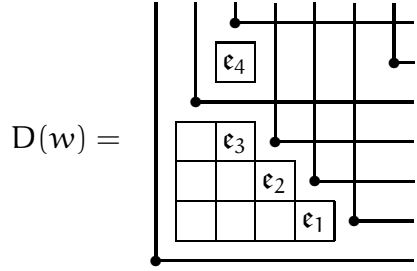
\end{example}

A permutation in ${\mathfrak S}_n$ is uniquely identified by the values of the rank matrix $(r_{ij}^w)$ when restricted to 
$D(w)$ or even merely $E(w)$.

Throughout the remainder of this subsection, we  assume $w$ is covexillary. 

Let $\lambda(w)$ be the partition obtained by sorting ${\sf code}(w)$. It is useful to know the \emph{graphical construction}
of $\lambda(w)$: Since $(a,b),(c,d)\in E(w)$ then one is weakly northwest of the other \cite{Manivel}, it follows
there is a unique Young diagram (in French notation) obtained by pushing all boxes of $D(w)$ 
on a given antidiagonal to the southwest; that is the diagram of $\lambda(w)$.

\begin{example}
Our running example $w=7314562$ is covexillary with $\lambda(w)=(4,3,2,1)$. \qed
\end{example}

Given $v\leq w$, \cite{LiYong1} defines (and proves the existence of) a different covexillary permutation $\kappa(v,w)$. This is the unique permutation whose essential set is obtained by moving each ${\mathfrak e}=(i,j)\in E(w)$ southwest along its antidiagonal by $r^v_{ij}$ squares to ${\mathfrak e}'$ and imposing that $r_{{\mathfrak e}'}^{\kappa(v,w)}=r^w_{ij}-r^v_{ij}$.
By construction, $\lambda(w)=\lambda(\kappa(v,w))$. The graphical construction $\lambda(\kappa(v,w))$ induces a bijection of boxes:
$\phi:\lambda(\kappa(v,w)) \to D(\kappa(v,w))$. Define a filling of each box $b\in \lambda(\kappa(v,w))$ with $r^w_{\phi(b)}$. We call
this ${\sf RRW}(v,w)$, as its provenance is from \cite{RRW}.

\begin{example}
One can check that $\kappa(1423576,7314562)=3472561$.\qed
\end{example}

The next result is the combinatorial rule of Theorem~\ref{thm:main}. It uses a similar result of
J.~Rajchgot-C.~Robichaux-A.~Weigandt \cite[Theorem~1.3]{RRW}:
\begin{theorem} 
\label{thm:formula}
\begin{equation}
\label{eqn:theform}
{\rm Reg}(R_{v,w})={\rm Reg}(R_{v,w}')=\deg H_{v,w}=\sum_{k\geq 1}\sum_{\alpha\in{\sf Connected}(\lambda(\kappa(v,w))_{\geq k})} {\sf maxdiag}(\alpha),
\end{equation}
where:
\begin{itemize}
\item $\lambda(\kappa(v,w))_{\geq k}$ is the shape of the subtableau of ${\sf RRW}(v,w)$ that have entries $\geq k$;
\item ${\sf Connected}(\kappa(v,w))_{\geq k})$ are the connected components of the aforementioned shape; and
\item ${\sf maxdiag}(\alpha)$ is the largest northwest-southeast diagonal that appears in $\alpha$.
\end{itemize}
\end{theorem}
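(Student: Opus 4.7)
The first chain of equalities ${\rm Reg}(R_{v,w}) = {\rm Reg}(R_{v,w}') = \deg H_{v,w}(q)$ has already been established in Section~4.1 via the Gr\"obner degeneration of \cite{LiYong1,LiYong2} together with the Cohen-Macaulayness inherited from the shellable Stanley-Reisner complex of \cite[Theorem~4.4]{KMY}. Hence the plan is to prove the combinatorial identity
\begin{equation*}
\deg H_{v,w}(q) = \sum_{k\geq 1} \sum_{\alpha \in {\sf Connected}(\lambda(\kappa(v,w))_{\geq k})} {\sf maxdiag}(\alpha).
\end{equation*}

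The strategy is to transport the computation to a matrix Schubert variety and invoke \cite[Theorem~1.3]{RRW}. By the degeneration to ${\rm init}_\prec \overline{X}_{\kappa(v,w)}$ recorded in Section~4.1, the numerator $H_{v,w}(q)$ (up to the explicit factor $(1-q)^{\ell(w_0 w) - \ell(w_0 v)}$) agrees with the ${\mathcal K}$-polynomial of $\overline{X}_{\kappa(v,w)}$, which by \cite{KMY,LS:groth} is (a substitution of) the Grothendieck polynomial $\mathfrak{G}_{\kappa(v,w)}$. Thus $\deg H_{v,w}(q)$ equals the degree of $\mathfrak{G}_{\kappa(v,w)}$ minus $\ell(\kappa(v,w))$, which is precisely the regularity of $\overline{X}_{\kappa(v,w)}$ as a module over its coordinate ring (using the Cohen-Macaulayness of matrix Schubert varieties and (\ref{eqn:easyreg2})). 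Since $\kappa(v,w)$ is covexillary, \cite[Theorem~1.3]{RRW} (in the vexillary convention, matched to ours by the footnote in the introduction) computes this regularity as exactly the right-hand side sum, provided one takes the correct filling of $\lambda(\kappa(v,w))$ by rank values.

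The remaining step is to identify the filling ${\sf RRW}(v,w)$ with the one used by Rajchgot-Robichaux-Weigandt when applied to $\kappa(v,w)$. By the construction of $\kappa(v,w)$ in \cite{LiYong1}, each essential box ${\mathfrak e} = (i,j) \in E(w)$ slides southwest along its antidiagonal by $r^v_{ij}$ units to a box ${\mathfrak e}' \in E(\kappa(v,w))$ carrying rank value $r^w_{ij} - r^v_{ij}$. Because the graphical construction of the partition sends each antidiagonal of the Rothe diagram to a single antidiagonal of $\lambda$, and because the RRW filling is by the rank values on $D(\kappa(v,w))$ pushed back to the partition shape, the sliding cancels precisely the adjustment $-r^v_{ij}$ inside the rank values. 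This gives a bijection between antidiagonals of $\lambda(w) = \lambda(\kappa(v,w))$ and essential entries, under which our filling by $r^w_{\phi(b)}$ matches the RRW filling by $r^{\kappa(v,w)}$ on the appropriate boxes. Feeding this into \cite[Theorem~1.3]{RRW} then yields (\ref{eqn:theform}).

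The principal obstacle is bookkeeping: the convention mismatch between vexillary and covexillary, the reflection inherent in our indexing of $Z^{(v)}$ from the bottom row, and the flagging implicit in \cite[Theorem~1.3]{RRW} must all be reconciled so that the shape $\lambda(\kappa(v,w))_{\geq k}$ and the notion of ${\sf maxdiag}$ correspond exactly to the data in \emph{ibid}. Once the dictionary between the two fillings is made precise (which should follow from tracking antidiagonals through the graphical construction of $\lambda$ from $E(\cdot)$), the combinatorial identity is formal and the theorem follows.
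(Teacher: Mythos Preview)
Your proposal is essentially the same approach as the paper's: reduce $\deg H_{v,w}$ to the degree of a Grothendieck polynomial attached to $\kappa(v,w)$ via the results of \cite{LiYong1,KMY}, then invoke \cite[Theorem~1.3]{RRW}. The paper's execution is slightly cleaner in one respect. Rather than passing through the ${\mathcal K}$-polynomial of $\overline{X}_{\kappa(v,w)}$ and $\mathfrak{G}_{\kappa(v,w)}$ as you do, it cites \cite[Theorem~6.6]{LiYong1} to obtain directly
\[
{\sf PS}_{v,w}(q)=\frac{\mathfrak{G}_{w_0\kappa(v,w)}(1-q,\ldots,1-q)}{(1-q)^{\binom{n}{2}}},
\]
so that the relevant Grothendieck polynomial is for the \emph{vexillary} permutation $w_0\kappa(v,w)$. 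This immediately yields $\deg H_{v,w}=\deg\mathfrak{G}_{w_0\kappa(v,w)}-\ell(w_0\kappa(v,w))$, and then the RRW formula (which, translated into the paper's conventions, reads $\deg\mathfrak{G}_u=\ell(u)+\sum_{k,\alpha}{\sf maxdiag}(\alpha)$ with the sum over $\lambda(w_0u)_{\geq k}$) applies verbatim with $u=w_0\kappa(v,w)$. Because $w_0u=\kappa(v,w)$, the shape and filling in the RRW sum are automatically those of ${\sf RRW}(v,w)$, and the ``bookkeeping'' you flag in your final paragraph (the vexillary/covexillary flip and the matching of fillings) is absorbed entirely into this single $w_0$-multiplication. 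Your route via $\mathfrak{G}_{\kappa(v,w)}$ works too, but forces you to carry that flip through the filling identification by hand, which is what your third paragraph is doing.
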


\begin{example}
To complete our running example, 
\[{\sf RRW}(1423576,7314562)=\tableau{0\\ 0 & 0 \\ 0 & 0 &1\\ 0& 0&1 & 1}\]
and hence Theorem~\ref{thm:formula} asserts ${\rm Reg}=2$ (the longest diagonal appearing in the unique $1$'s component), in agreement with Example~\ref{exa:computed}.\qed
\end{example}

For any $u\in {\mathfrak S}_n$ let ${\mathfrak G}_w(x_1,\ldots,x_n)$ be the \emph{Grothendieck polynomial} \cite{LS:groth}. By definition,
${\mathfrak G}_{w_0}=x_1^{n-1}x_2^{n-2}\cdots x_{n-1}$ where $w_0$ is the longest element in ${\mathfrak S}_n$. If $\ell(us_i)>\ell(u)$
where $s_i=(i \ i+1)$ is a simple transposition, then ${\mathfrak G}_{u}=\pi_i({\mathfrak G}_{us_i})$ where 
\[\pi_i:{\mathbb Z}[x_1,x_2,\ldots,x_n]\to {\mathbb Z}[x_1,x_2,\ldots,x_n]\] 
is the \emph{isobaric divided
difference operator} defined by
\[\pi(f)=\frac{(1-x_{i+1})f(\cdots,x_i,x_{i+1},\cdots)-(1-x_i)f(\cdots,x_{i+1},x_i,\cdots)}{x_i-x_{i+1}}.\]

\subsection{Proof of Theorem~\ref{thm:formula}}  By \cite[Theorem~6.6]{LiYong1},
\begin{equation}
\label{eqn:finalPS}
{\sf PS}_{v,w}(q)=\frac{G_{\lambda}(q)}{(1-q)^{n\choose 2}},
\end{equation}
where $G_{\lambda}(q)={\mathfrak G}_{w_0\kappa(v,w)}(1-q,1-q,\ldots,1-q)$. Comparing (\ref{eqn:finalPS}) with (\ref{eqn:firstPS})
and using the fact that $\dim(X_w)=\ell(w)$, we see that
\begin{equation}
\label{eqn:May25ttt}
\deg \, H_{v,w} = \deg \, {\mathfrak G}_{w_0\kappa(v,w)} -\left ({n\choose 2}-\ell(w)\right).
\end{equation}
On the other hand, since $\lambda(\kappa(v,w))=\lambda(w)$, one has $\ell(\kappa(v,w))=\ell(w)$, and hence 
\begin{equation}
\label{eqn:May25uuu}
\ell(w_0\kappa(v,w))={n\choose 2}-\ell(w).
\end{equation}
Moreover since $\kappa(v,w)$ is covexillary, $w_0\kappa(v,w)$ is vexillary (avoids $2143$).  The formula of 
J.~Rajchgot-C.~Robichaux-A.~Weigandt \cite[Theorem~1.3]{RRW} shows (in our conventions) that for any vexillary 
$u\in {\mathfrak S}_n$ that
\begin{equation}
\label{eqn:RRWformula123}
\deg {\mathfrak G}_u=\ell(u)+\sum_{k\geq 1}\sum_{\alpha\in{\sf Connected}(\lambda(w_0u)_{\geq k})} {\sf maxdiag}(\alpha).
\end{equation}
Hence the theorem follows by combining (\ref{eqn:May25ttt}), (\ref{eqn:May25uuu}) and (\ref{eqn:RRWformula123}) with $u=w_0\kappa(v,w)$.\qed

In general, there are no simple formulas to compute the degree of a Kazhdan-Lusztig polynomial $P_{v,w}(q)$ (we refer the reader to
\cite[Chapter~5]{Bjorner.Brenti}). This proves the final assertion of Theorem~\ref{thm:main}.

\begin{corollary}
Let $w\in {\mathfrak S}_n$ be covexillary, then $\deg\, P_{u,v}$ is computed by the rule of Theorem~\ref{thm:formula}.
\end{corollary}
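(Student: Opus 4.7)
The plan is to compose two identities already established in the excerpt. From Theorem~\ref{thm:main} in the covexillary case, one has simultaneously that Conjecture~\ref{conj:1} holds, i.e.\ ${\rm Reg}(R_{v,w})=\deg H_{v,w}$, and that ${\rm Reg}(R_{v,w})=\deg P_{v,w}$. The latter equality is the concluding assertion of Theorem~\ref{thm:main}, which in turn is extracted in Section~\ref{sec:4.1} from the result of \cite{LiYong2} that $\deg H_{v,w}(q)=\deg P_{v,w}(q)$ for covexillary $w$. Combining these gives $\deg P_{v,w}=\deg H_{v,w}$.

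Next, invoke Theorem~\ref{thm:formula}, which supplies the combinatorial identity
\[
\deg H_{v,w}=\sum_{k\geq 1}\sum_{\alpha\in{\sf Connected}(\lambda(\kappa(v,w))_{\geq k})}{\sf maxdiag}(\alpha).
\]
Chaining with the previous paragraph yields the same expression for $\deg P_{v,w}$, which is precisely the rule of Theorem~\ref{thm:formula}.

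There is no genuine obstacle to overcome: the substantive work sits in Theorem~\ref{thm:main} (whose identification ${\rm Reg}(R_{v,w})=\deg P_{v,w}$ depends on the covexillary degree comparison of \cite{LiYong2}) and in Theorem~\ref{thm:formula} (whose proof reduces the calculation to the Rajchgot--Robichaux--Weigandt degree formula for the Grothendieck polynomial ${\mathfrak G}_{w_0\kappa(v,w)}$ of the vexillary permutation $w_0\kappa(v,w)$). The corollary is therefore best presented as a one-line consequence, worth recording because no analogous closed combinatorial formula for $\deg P_{v,w}$ is available in general, as noted just before the statement via the reference to \cite[Chapter~5]{Bjorner.Brenti}.
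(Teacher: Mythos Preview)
Your proposal is correct and follows essentially the same approach as the paper. The paper's proof is simply a more direct version: it cites \cite[Theorem~1.2]{LiYong2} for $\deg H_{v,w}=\deg P_{v,w}$ in the covexillary case and then applies Theorem~\ref{thm:formula}, whereas you route the same identity through the concluding assertion of Theorem~\ref{thm:main} (which you yourself note is derived from \cite{LiYong2}) before invoking Theorem~\ref{thm:formula}.
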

\begin{proof}
\cite[Theorem~1.2]{LiYong2} shows 
$\deg\, H_{v,w}(q)=\deg\, P_{v,w}(q)$ when $w$ is covexillary. Now apply Theorem~\ref{thm:formula}.
\end{proof}
\section{Further results and discussion}\label{sec:5}

These conjectures were asserted in~\cite{LiYong2}:

\begin{conjecture}\label{conj:1'}
$R_{v,w}$ is Cohen-Macaulay. Consequently, $H_{v,w}\in {\mathbb N}[q]$.
\end{conjecture}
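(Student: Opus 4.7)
The plan is to extend the Gröbner-theoretic argument of Section~\ref{sec:4} beyond the covexillary case. In the covexillary proof, three steps chain together: (i) \cite[Theorems~3.1 and~5.5]{LiYong1} flatly degenerate $\mathrm{Spec}(R'_{v,w})$ to $\mathrm{init}_\prec \overline{X}_{\kappa(v,w)}$; (ii) this initial scheme is a reduced shellable Stanley-Reisner scheme, hence Cohen-Macaulay by \cite[Theorem~4.4]{KMY}; (iii) Cohen-Macaulayness lifts back to $R'_{v,w}$ by upper semicontinuity of depth \cite[Section~15.8]{Eisenbud}, and then to $R_{v,w}$ through the product decomposition (\ref{eqn:KLfact}), which preserves Cohen-Macaulayness under products with affine space. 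Since \emph{every} matrix Schubert variety $\overline{X}_u$ is Cohen-Macaulay by \cite{KMY} (not only those indexed by covexillary $u$), only step~(i) is genuinely restricted to the covexillary case, and the plan is to remove that restriction.

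First I would seek a non-covexillary analogue of the $\kappa$-construction. The essential set $E(w)$ and its rank data make sense for every $w$, and the Li-Yong recipe ``slide each essential box southwest along its antidiagonal by $r^v_{ij}$'' still produces a well-defined collection of rank conditions; however, these conditions need not correspond to a single permutation. The task would be either to extend \cite{KMY} to the broader class of schemes defined by such rank conditions, or, after a suitable change of coordinates, to identify $\mathrm{init}_\prec I'_{v,w}$ with the initial ideal of an honest matrix Schubert variety for some $u(v,w)$. Computational evidence using the Macaulay2 package cited in Example~\ref{exa:1} should guide both the target and the term order. Once Cohen-Macaulayness of $R_{v,w}$ is established, $H_{v,w}\in\mathbb{N}[q]$ follows immediately by reducing modulo a linear system of parameters: the resulting Artinian quotient has Hilbert series $H_{v,w}(q)$ with non-negative (vector-space dimension) coefficients.

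The main obstacle is precisely the non-covexillary generalization of step~(i). In the covexillary case, $3412$-avoidance is used essentially: it ensures that lowest-degree components of the Kazhdan-Lusztig generators are themselves minors, matching those of a matrix Schubert variety exactly. For $w$ containing a $3412$ pattern, inhomogeneous generators such as (\ref{eqn:inhomog}) illustrate how cancellations among minors of $Z^{(v)}$ can produce tangent cone elements whose leading forms do not arise from any single matrix Schubert ideal. A fallback is an induction on the Bruhat interval $[v,w]$ using semicontinuity of $H_{v,w}(q)$ from \cite{LiYong2}, but semicontinuity of the Hilbert series does not by itself transfer Cohen-Macaulayness across Bruhat order, so this route would require additional input --- perhaps a tangent-cone-adapted Frobenius splitting, or a direct depth estimate via the local cohomology framework of Section~\ref{sec:3}.
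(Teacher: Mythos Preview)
This statement is a \emph{conjecture} in the paper, not a theorem: the paper offers no proof of it in general. It is listed in Section~\ref{sec:5} as an open assertion from \cite{LiYong2}, supported only by computer verification for $n\leq 7$. The covexillary case is established separately within the proof of Theorem~\ref{thm:main}, via exactly the three-step chain (i)--(iii) you recapitulate; but outside that case the paper makes no claim.

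Your proposal is accordingly not a proof but a research outline, and you say as much: you identify step~(i) --- the degeneration of $\mathrm{Spec}(R'_{v,w})$ to a shellable Stanley--Reisner scheme --- as the only step requiring covexillarity, and you propose to remove that hypothesis. The obstacle you name is genuine and, as far as the paper is concerned, unresolved. In particular, your suggestion that the slid essential-set rank data ``need not correspond to a single permutation'' is the actual failure mode: without $3412$-avoidance there is no reason the tangent cone ideal $I'_{v,w}$ should have an initial ideal matching that of any matrix Schubert variety, and the paper neither asserts nor hints at a replacement for $\kappa(v,w)$ in general. Your fallback ideas (Frobenius splitting adapted to tangent cones, direct local-cohomology depth bounds) are reasonable directions but are not pursued in the paper and do not currently yield a proof.

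In short: there is no discrepancy to report, because the paper contains no proof of this conjecture. Your write-up correctly isolates why the covexillary argument breaks, but it does not close the gap, and neither does the paper.
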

 
That $X_w$ is Cohen-Macaulay does not imply Conjecture~\ref{conj:1'}. In fact, C.~Huneke \cite{Huneke} established $R_{p,Y}$ being
Cohen-Macaulay implies the same for $({\mathcal O}_{p,Y},{\mathfrak m}_p,\Bbbk)$, and gave counterexamples for the converse.
This is a strengthening of Conjecture~\ref{conj:1'}:
\begin{conjecture}[Semicontinuity]\label{conj:2'}
If $u\leq v\leq w$ then $[q^t]H_{u,w}\geq [q^t]H_{v,w}$.
\end{conjecture}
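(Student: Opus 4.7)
The plan is to combine a reduction to Bruhat covers with a $T$-equivariant deformation-to-the-normal-cone construction and Cohen--Macaulay monotonicity of $h$-vectors. By transitivity of the Bruhat order, it suffices to prove $[q^t]H_{u,w}\geq [q^t]H_{v,w}$ whenever $u\lessdot v$ is a Bruhat cover. For such a cover, there is a $T$-invariant curve $C\cong\mathbb{P}^1\subset X_v\subseteq X_w$ connecting $e_u$ and $e_v$, and hence a one-parameter subgroup $\lambda:\mathbb{G}_m\to T$ that contracts $e_v$ onto $e_u$ along $C$.

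Next, I would use $\lambda$ to construct the standard deformation-to-the-normal-cone $\mathcal{Y}\to\mathbb{A}^1$ of the Kazhdan--Lusztig variety $\mathcal{N}_{u,w}$ at $e_u$: this is a $\mathbb{G}_m$-equivariant flat family whose generic fiber is $\mathcal{N}_{u,w}$ and whose special fiber is $\mathrm{Spec}\, R'_{u,w}$. Since $C\setminus\{e_u\}\subset\mathcal{N}_{u,w}$, the orbit $\lambda(\mathbb{G}_m)\cdot e_v$ gives a $\mathbb{G}_m$-equivariant section of $\mathcal{Y}$ over $\mathbb{A}^1\setminus\{0\}$, and I would analyze the limiting orbit closure inside $\mathrm{Spec}\, R'_{u,w}$. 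Using a Bialynicki--Birula type stratification together with the explicit matrix coordinates of \cite{Governing}, the transverse slice of $\mathrm{Spec}\, R'_{u,w}$ along this distinguished orbit, after passing to the associated graded, should be isomorphic to $\mathrm{Spec}\, R'_{v,w}$. Granting this slice identification, the Poincar\'e series of $R'_{u,w}$ factors compatibly with that of $R'_{v,w}$, and under Conjecture~\ref{conj:1'} the $h$-polynomial of a Cohen--Macaulay ring decomposed this way differs from that of its slice by a polynomial with non-negative coefficients. This would yield the desired coefficient-wise inequality.

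The main obstacle is the slice identification in the second step: one must show that the transverse structure of the tangent cone at $e_u$, along the tangent direction of $C$ pointing to $e_v$, reproduces the tangent cone of $X_w$ at $e_v$. In the covexillary case, the explicit Gr\"obner bases for $I'_{u,w}$ in \cite{LiYong1} exhibit enough rigidity to make this slice structure transparent, and this almost certainly underlies the semicontinuity already established in \cite{LiYong2}. For general $w$, no comparable Gr\"obner-theoretic description is known, so a full proof of Conjecture~\ref{conj:2'} would likely require either a non-covexillary generalization of the structure theorems of \cite{LiYong1} for $I'_{u,w}$, or an independent representation-theoretic realization of $H_{v,w}(q)$ --- for instance as a character in equivariant $K$-theory --- that makes coefficient-wise monotonicity under Bruhat specialization manifest.
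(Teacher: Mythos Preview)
This statement is Conjecture~\ref{conj:2'}, which the paper leaves \emph{open}: no proof is offered, only computer verification for $n\leq 6$ and part of $n=7$, together with the observation (Proposition~\ref{evenstronger}) that it would imply Conjecture~\ref{conj:2}. There is therefore no argument in the paper to compare against; you are proposing an attack on an open problem, and you explicitly flag the central gap (the transverse-slice identification) yourself.

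Beyond that acknowledged gap, the numerical endgame is also not in order. You need a mechanism that converts the geometric relationship between $R'_{u,w}$ and $R'_{v,w}$ into a coefficientwise inequality of $h$-polynomials. But if that relationship is ``transverse slice by a degree-one regular element,'' as a literal hyperplane section through a Cohen--Macaulay cone would be, then the $h$-polynomial is \emph{unchanged}: one has ${\sf PS}_{R'_{u,w}/\ell}=(1-q)\,{\sf PS}_{R'_{u,w}}$, and comparing with ${\sf PS}_{R'_{v,w}}=H_{v,w}(q)/(1-q)^{\ell(w)-\ell(v)}$ forces $H_{u,w}=H_{v,w}$ for every cover $u\lessdot v$. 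This is already false whenever $e_u$ is singular and $e_v$ is smooth in $X_w$. If instead the relationship is a flat degeneration, Hilbert series are again preserved and you get the same contradiction. A genuine strict drop must therefore come from a non-flat, non-regular specialization, and the phrase ``Cohen--Macaulay monotonicity of $h$-vectors'' does not name one: there is no general principle that the $h$-vector of a Cohen--Macaulay quotient is coefficientwise bounded by that of a Cohen--Macaulay ring mapping onto it. As written, the proposal does not isolate where the inequality actually originates.
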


\begin{conjecture}[{\cite[Proposition~2.1]{LiYong2}}]
\label{conj:3'} 
$\deg H_{v,w}\leq \frac{\ell(w)-\ell(v)-1}{2}.$
\end{conjecture}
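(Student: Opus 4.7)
The plan is to bound $\deg H_{v,w}(q)$ by routing through the Kazhdan-Lusztig polynomial $P_{v,w}(q)$, for which the identical bound $\deg P_{v,w}\leq (\ell(w)-\ell(v)-1)/2$ is classical (essentially by definition; see \cite[Chapter~5]{Bjorner.Brenti}). The technical core thus reduces to establishing
\[
\deg H_{v,w}(q) \leq \deg P_{v,w}(q).
\]
In the covexillary case, this inequality is in fact an equality, underlying the last clause of Theorem~\ref{thm:main} via \cite[Theorem~1.2]{LiYong2}; the goal is to find an argument that does not rely on covexillarity.

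My approach is to interpret $H_{v,w}(q)$ as the $h$-polynomial of the tangent cone ${\rm Spec}\, R'_{v,w}$, which has dimension $\ell(w)-\ell(v)$ inside the ambient affine space ${\rm Spec}\,\mathbb{C}[{\bf z}^{(v)}]$, and compare it to $P_{v,w}(q)$, which records the local intersection cohomology of $X_w$ at $e_v$. A natural bridge is equivariant multiplicities (Rossmann, Brion, Kumar): the leading coefficient of $H_{v,w}$ and the value $H_{v,w}(1)$ can be computed via torus-equivariant localization, which also controls the stalk of the intersection cohomology sheaf of $X_w$ and hence $P_{v,w}$. Concretely, I would attempt to produce an identity of the form
\[
{\sf PS}_{v,w}(q) \;=\; \frac{P_{v,w}(q)\cdot Q_{v,w}(q)}{(1-q)^{\ell(w)-\ell(v)}}
\]
for a polynomial $Q_{v,w}$ whose degree is zero (or at least bounded independently of $v,w$), so that the desired degree inequality follows by comparing leading terms.

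The main obstacle is producing such an identity, or any clean comparison, in the general non-covexillary setting: the relationship between $H_{v,w}$ and $P_{v,w}$ at the level of coefficients is genuinely subtle once one leaves the covexillary world. A fallback plan is to work directly with the generators of $I_{v,w}'$ (lowest-degree parts of minors of specified sizes in submatrices of $Z^{(v)}$ controlled by the rank matrix $r^w$) and bound $\deg H_{v,w}$ via a Gr\"obner degeneration of ${\mathcal N}_{v,w}$ to a Stanley-Reisner scheme. Standard commutative-algebra bounds in terms of the associated simplicial complex (e.g., those recorded in \cite{Bruns.Herzog, Miller.Sturmfels}) would then control $\deg H_{v,w}$ combinatorially, and the remaining task would be a finite combinatorial verification that this data is bounded by $(\ell(w)-\ell(v)-1)/2$ — a bound which is suggested by, and in the covexillary case implied by, the Kazhdan-Lusztig degree bound.
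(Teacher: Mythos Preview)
The statement you are trying to prove is labeled a \emph{conjecture} in the paper (Conjecture~\ref{conj:3'}), and the paper supplies no proof in the general case: it records only computational verification for $n\leq 7$ and the covexillary case (via Theorem~\ref{thm:main}). So there is no ``paper's own proof'' to compare against, and your proposal should be read as a research strategy toward an open problem rather than a proof.

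That said, your strategy has a genuine gap at its core. The inequality you aim to establish, $\deg H_{v,w}\leq \deg P_{v,w}$, is precisely the speculative strengthening the paper flags just after Conjecture~\ref{conj:3} (under Conjecture~\ref{conj:1}, ${\rm Reg}(R_{v,w})=\deg H_{v,w}$, so ``${\rm Reg}(R_{v,w})\leq \deg P_{v,w}$'' is your inequality), and the paper explicitly warns that ``the evidence is not strong ($n\leq 6$)''. Your proposed mechanism, a factorization ${\sf PS}_{v,w}(q)=P_{v,w}(q)\,Q_{v,w}(q)/(1-q)^{\ell(w)-\ell(v)}$ with $\deg Q_{v,w}=0$, would force $H_{v,w}$ to be a scalar multiple of $P_{v,w}$; this already fails in small covexillary examples where $\deg H_{v,w}=\deg P_{v,w}$ but the two polynomials have different coefficient sequences, and it is not even suggested by the equivariant-multiplicity formalism you invoke (Rossmann--Brion--Kumar localization computes $H_{v,w}(1)$ and leading behavior, not a divisibility of $H_{v,w}$ by $P_{v,w}$). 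Your fallback via a Gr\"obner degeneration to a Stanley--Reisner scheme is also not a proof: outside the covexillary case no such degeneration of the tangent cone with controlled combinatorics is known, and generic regularity bounds from \cite{Bruns.Herzog,Miller.Sturmfels} do not produce the specific quantity $(\ell(w)-\ell(v)-1)/2$ without additional structural input. In short, both branches of your plan reduce Conjecture~\ref{conj:3'} to statements that are at least as hard and currently unproven.
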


\begin{proposition}\label{evenstronger}
Conjectures~\ref{conj:1'},~\ref{conj:2'}, and~\ref{conj:3'} imply Conjectures~\ref{conj:1},~\ref{conj:2}, and~\ref{conj:3}.
\end{proposition}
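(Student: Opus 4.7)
The plan is to isolate the identification ${\rm Reg}(R_{v,w}) = \deg H_{v,w}(q)$ as the main content---this is the deduction of Conjecture~\ref{conj:1} from Conjecture~\ref{conj:1'}---and then to observe that Conjectures~\ref{conj:2} and~\ref{conj:3} follow from the remaining hypotheses essentially formally.

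First I would tackle the identification. Since $R_{v,w}$ is a standard ${\mathbb N}$-graded quotient $S/J$ of the polynomial ring $S = \Bbbk[{\mathfrak m}_{e_v}/{\mathfrak m}_{e_v}^2]$ (in $N := \dim_\Bbbk {\mathfrak m}_{e_v}/{\mathfrak m}_{e_v}^2$ variables), the Cohen-Macaulay hypothesis of Conjecture~\ref{conj:1'} lets me invoke (\ref{eqn:easyreg2}): ${\rm Reg}(R_{v,w}) = \deg {\mathcal K}(R_{v,w},q) - {\rm ht}_S(J)$. I would then reconcile the two normalizations of the Poincar\'e series. Equation (\ref{eqn:firstPS}) writes ${\sf PS}_{v,w}(q) = H_{v,w}(q)/(1-q)^{\ell(w)}$ (using $\dim X_w = \ell(w) = \dim R_{v,w}$), while (\ref{eqn:easyreg1}) writes it as ${\mathcal K}(R_{v,w},q)/(1-q)^N$, so ${\mathcal K}(R_{v,w},q) = H_{v,w}(q)\cdot(1-q)^{N-\ell(w)}$ and simultaneously ${\rm ht}_S(J) = N - \ell(w)$. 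Substituting, the two $(N-\ell(w))$ contributions cancel, giving ${\rm Reg}(R_{v,w}) = \deg H_{v,w}(q)$, which is Conjecture~\ref{conj:1}.

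With Conjecture~\ref{conj:1} in hand, Conjecture~\ref{conj:3} is immediate from Conjecture~\ref{conj:3'}. For Conjecture~\ref{conj:2}, the key observation is that Cohen-Macaulayness (Conjecture~\ref{conj:1'}) forces the $h$-polynomial $H_{v,w}$ to lie in ${\mathbb N}[q]$, so its leading coefficient is a strictly positive integer. Setting $d := \deg H_{v,w}$, Conjecture~\ref{conj:2'} then yields $[q^d] H_{u,w} \geq [q^d] H_{v,w} > 0$, whence $\deg H_{u,w} \geq \deg H_{v,w}$; applying Conjecture~\ref{conj:1} to both $u$ and $v$ gives ${\rm Reg}(R_{u,w}) \geq {\rm Reg}(R_{v,w})$.

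The only mildly subtle step is the first one. The obstacle---really just careful bookkeeping---is to verify that the ${\mathfrak m}$-adic grading on $R_{v,w}$ truly is the standard ${\mathbb N}$-grading presupposed by (\ref{eqn:easyreg1}) and (\ref{eqn:easyreg2}), and that the dimension/height data line up as claimed ($\dim R_{v,w} = \ell(w)$ and ${\rm ht}_S(J) = N - \ell(w)$). No deeper geometric input is needed beyond the Cohen-Macaulay assumption.
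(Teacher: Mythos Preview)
Your proof is correct and follows essentially the same approach as the paper: the Cohen--Macaulay hypothesis together with~(\ref{eqn:easyreg2}) yields ${\rm Reg}(R_{v,w})=\deg H_{v,w}(q)$, after which Conjectures~\ref{conj:2} and~\ref{conj:3} follow formally from Conjectures~\ref{conj:2'} and~\ref{conj:3'}. The paper's proof is terser---it simply invokes ``the reasoning in our proof of Theorem~\ref{thm:main}'' for the first step (which routes through $R'_{v,w}$ rather than working intrinsically with $R_{v,w}$ as you do)---but the content is the same, and your explicit unpacking of why coefficientwise semicontinuity plus $H_{v,w}\in{\mathbb N}[q]$ forces degree semicontinuity is exactly the implicit logic behind the paper's one-line deduction of Conjecture~\ref{conj:2}.
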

\begin{proof}
The Cohen-Macaulay assertion of Conjecture~\ref{conj:1'} implies Conjecture~\ref{conj:1} by the reasoning in our proof of Theorem~\ref{thm:main}. 
Combined with Conjecture~\ref{conj:2'} gives Conjecture~\ref{conj:2}. Separately, combined with Conjecture~\ref{conj:3'} one
would obtain Conjecture~\ref{conj:3}.
\end{proof}

During the preparation of \cite{LiYong2}, Conjectures~\ref{conj:1'} and~\ref{conj:3'} were checked for $n\leq 7$. Conjecture~\ref{conj:2'} 
was checked for at least $n\leq 6$ and much of $n=7$. 

Let
${\rm maxReg}(n)=\max_{v\leq w\in {\mathfrak S}_n}{\rm Reg}(R_{v,w})$.

\begin{conjecture}
\label{conj:maxReg}
${\rm maxReg}(n)=\Theta(n^2)$.
\end{conjecture}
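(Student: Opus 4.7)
The claim bundles an upper bound ${\rm maxReg}(n)=O(n^2)$ and a lower bound ${\rm maxReg}(n)=\Omega(n^2)$. The upper bound reduces cleanly to Conjecture~\ref{conj:3}: if ${\rm Reg}(R_{v,w})\leq\frac{\ell(w)-\ell(v)-1}{2}$ holds universally, then since $0\leq\ell(v),\ell(w)\leq\binom{n}{2}$, we immediately obtain ${\rm maxReg}(n)\leq\binom{n}{2}/2=O(n^2)$. Thus settling the upper half is essentially equivalent to resolving Conjecture~\ref{conj:3} beyond the covexillary case. A weaker path, still sufficient for the $O(n^2)$ bound, is to establish Conjecture~\ref{conj:3'} (an $O(n^2)$ bound on $\deg H_{v,w}$) together with the Cohen-Macaulay assertion of Conjecture~\ref{conj:1'}, yielding ${\rm Reg}(R_{v,w})\leq\deg H_{v,w}=O(n^2)$ via (\ref{eqn:easyreg2}).

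For the lower bound, the strategy is to exhibit an explicit family of \emph{covexillary} pairs $(v_n\leq w_n)$ in $\mathfrak{S}_n$ where Theorem~\ref{thm:main}/\ref{thm:formula} is in force. A natural candidate is $v_n=\mathrm{id}$ together with $w_n$ covexillary chosen so that $\lambda(w_n)=\lambda(\kappa(\mathrm{id},w_n))$ is a ``deep'' staircase $(k,k-1,\ldots,1)$ with $k=\lfloor n/2\rfloor$, and so that the ${\sf RRW}(v_n,w_n)$ filling takes values distributed across $\{0,1,\ldots,\Theta(k)\}$ in such a way that for each layer $j\in\{1,\ldots,k\}$ the shape $\lambda(\kappa(v_n,w_n))_{\geq j}$ has at least one connected component with ${\sf maxdiag}=\Omega(k-j)$. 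Then Theorem~\ref{thm:formula} delivers
\[
{\rm Reg}(R_{v_n,w_n})=\sum_{j\geq 1}\sum_{\alpha}{\sf maxdiag}(\alpha)\;\geq\;\sum_{j=1}^{k}\Omega(k-j)=\Omega(k^2)=\Omega(n^2).
\]
Producing such $w_n$ reduces to a finite combinatorial check via the graphical recipe for $\kappa(v,w)$ from \cite{LiYong1}: one designs $E(w_n)$ and its ranks so that the antidiagonal translation by $r^{v_n}$ lands the boxes of $D(\kappa(v_n,w_n))$ in a staircase pattern with the desired ranks, and one verifies the avoidance of $3412$. Explicit vexillary/covexillary families, of the sort used in \cite{RRW,Hafner}, furnish convenient templates.

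The main obstacle is the unconditional upper bound. Conjecture~\ref{conj:3} is wide open beyond the covexillary case, and no polynomial \emph{a priori} bound on ${\rm Reg}$ of Kazhdan-Lusztig tangent cones appears in the literature; generic commutative-algebra bounds for regularity are doubly exponential in the number of variables and useless here. The lower bound, by contrast, is combinatorially prescriptive once the covexillary family is chosen; the only genuine subtlety is arranging simultaneously that the number of ``active'' layers and the ${\sf maxdiag}$ of the components within them are each linear in $n$, so their product is quadratic.
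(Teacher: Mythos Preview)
Your overall plan matches the paper's treatment: the lower bound comes from an explicit covexillary family evaluated via Theorem~\ref{thm:formula} (this is Proposition~\ref{prop:covexmax}), and the upper bound is left conditional on Conjecture~\ref{conj:3} (this is the Corollary following Proposition~\ref{prop:covexmax}). The difference is that the paper actually constructs and verifies the lower-bound family, whereas you only describe its desiderata. Concretely, for $n=3j-1$ the paper takes $v=\mathrm{id}$ and the covexillary $w$ with ${\sf code}(w)=(1,2,\ldots,j,0,\ldots,0)$; then $\kappa(\mathrm{id},w)=w$, the shape is the staircase $(j,j-1,\ldots,1)$, and the ${\sf RRW}$ filling places $c-1$ in column $c$, so formula~(\ref{eqn:theform}) gives exactly $(j-1)+(j-2)+\cdots+1=\binom{j}{2}=\Theta(n^2)$.

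Your choice $k=\lfloor n/2\rfloor$ is unsubstantiated: it is not clear one can realize a staircase of that size together with the required rank spread inside $\mathfrak{S}_n$ (note the paper's explicit family needs $n\approx 3j$, not $2j$, precisely so that the rank values $r^{w}$ on $D(w)$ climb to $j-1$), and in any case you never pin down an actual permutation, deferring instead to ``a finite combinatorial check.'' For the asymptotic $\Theta(n^2)$ the constant is immaterial, so the paper's $j\approx n/3$ family would already deliver what you need; but as written your lower-bound half is a roadmap rather than an argument.
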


Computational data was not directly useful to arrive at Conjecture~\ref{conj:maxReg}. For $n=4,5,6,7$,
${\rm maxReg}(n)=1,2,3,5$, respectively. For example, when $n=7$ the maximizer
is the (non-covexillary) $w=6734512$ at $v=id$. Here $I_{v,w}$ is inhomogeneous and 
\[H_{id,6734512}(q)=1+4q+9q^2+9q^3+4q^4+q^5.\]

Let 
$\overline{{\rm maxReg}(n)}=\max_{v\leq w\in {\mathfrak S}_n, w\text{\ covexillary}}{\rm Reg}(R_{v,w})$. 
We apply Theorem~\ref{thm:formula} to prove the covexillary case of Conjecture~\ref{conj:maxReg}.

\begin{proposition}
\label{prop:covexmax}
$\overline{{\rm maxReg}(n)}=\Theta(n^2)$.
\end{proposition}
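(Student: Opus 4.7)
The plan is to establish matching $O(n^2)$ upper and $\Omega(n^2)$ lower bounds on $\overline{{\rm maxReg}(n)}$.

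The upper bound follows immediately from the covexillary case of Conjecture~\ref{conj:3}, just established as part of Theorem~\ref{thm:main}: for any $v\leq w$ in $\mathfrak{S}_n$ with $w$ covexillary,
\[
{\rm Reg}(R_{v,w}) \leq \frac{\ell(w)-\ell(v)-1}{2} \leq \frac{1}{2}\binom{n}{2} = O(n^2).
\]

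For the lower bound, I would exhibit a family $(v_n, w_n)$ of covexillary pairs with ${\rm Reg}(R_{v_n, w_n}) = \Omega(n^2)$, via Theorem~\ref{thm:formula}, which writes ${\rm Reg}(R_{v,w})$ as the double sum $\sum_{j\geq 1}\sum_{\alpha} {\sf maxdiag}(\alpha)$ over connected components $\alpha$ of superlevel sets of the tableau ${\sf RRW}(v,w)$. To reach $\Omega(n^2)$ it suffices to produce such a tableau whose shape $\lambda(w_n)$ admits a NW-SE diagonal of length $\Theta(n)$ and whose entries reach order $\Theta(n)$ throughout a subregion of area $\Theta(n^2)$: then $\Omega(n)$ successive superlevel sets each contribute ${\sf maxdiag}=\Omega(n)$. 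A concrete candidate is $n=2k$ with $w_n := w_0 u_n$, where $u_n\in\mathfrak{S}_n$ is the vexillary Grassmannian permutation of rectangular shape $k\times k$; then $w_n$ is covexillary (since $u\leftrightarrow w_0 u$ interchanges vexillary and covexillary) and $\lambda(w_n)$ has maxdiag $k$. One then selects $v_n\leq w_n$ so that the $\kappa$-construction of \cite{LiYong1,LiYong2} places the Rothe diagram of $\kappa(v_n,w_n)$ into a region where $r^{w_n}$ takes values of order $n$; equivalently, via (\ref{eqn:May25ttt}) and (\ref{eqn:RRWformula123}) applied to the vexillary permutation $w_0\kappa(v_n,w_n)$, the quantity $\deg \mathfrak{G}_{w_0\kappa(v_n,w_n)} - \ell(w_0\kappa(v_n,w_n))$ becomes $\Omega(n^2)$.

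The main technical obstacle is verifying this last step: one must choose $v_n$ so that after pushing each essential-set box $\mathfrak{e} = (i,j)\in E(w_n)$ southwest along its antidiagonal by $r^{v_n}_{ij}$ squares (updating rank to $r^{w_n}_{ij} - r^{v_n}_{ij}$), the Rothe diagram of the resulting $\kappa(v_n, w_n)$ overlaps a region of area $\Omega(n^2)$ on which $r^{w_n}$ is of order $n$, and the essential-set motion does not leave the $n\times n$ grid. This is a direct if combinatorially delicate check, after which combining with the upper bound yields $\overline{{\rm maxReg}(n)}=\Theta(n^2)$.
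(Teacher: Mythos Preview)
Your upper bound is correct and in fact cleaner than the paper's: the paper bounds the formula of Theorem~\ref{thm:formula} directly (each of $\leq n$ nonempty superlevel sets contributes $\leq n$, so ${\rm Reg}(R_{v,w})\leq n^2$), whereas you invoke the covexillary case of Conjecture~\ref{conj:3}, already proved in Theorem~\ref{thm:main}, to get the sharper $\frac{1}{2}\binom{n}{2}$. Either route gives $O(n^2)$.

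The lower bound, however, is not a proof but a sketch of a plan with its key step missing. You never specify $v_n$, and you explicitly defer the verification (``a direct if combinatorially delicate check'') without performing it. Nothing in your argument actually establishes that a suitable $v_n$ exists, nor that the resulting ${\sf RRW}(v_n,w_n)$ has $\Theta(n)$ superlevel sets each with ${\sf maxdiag}=\Theta(n)$. The paper avoids this entirely by taking $v=id$ (so that $\kappa(id,w)=w$ and no essential-set shifting is required) and by choosing $w$ to be the covexillary permutation with ${\sf code}(w)=(1,2,\ldots,j,0,\ldots,0)$ in $\mathfrak{S}_{3j-1}$. Then $\lambda(w)$ is the staircase $(j,j-1,\ldots,1)$, and one sees directly that column $c$ of ${\sf RRW}(id,w)$ is filled with $c-1$'s, so the superlevel set $\{\geq k\}$ is a staircase with ${\sf maxdiag}=j-k$. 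Summing over $k=1,\ldots,j-1$ gives ${\rm Reg}(R_{id,w})=\binom{j}{2}=\Theta(n^2)$. This is both simpler and complete; your rectangular construction and unspecified choice of $v_n$ are unnecessary complications, and as written the argument has a genuine gap.
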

\begin{proof}
For the lower bound, first suppose $n=3j-1$ for $j\geq 1$. Let $v=id$ and $w\in {\mathfrak S}_{n}$ be the unique permutation 
with ${\sf code}(w)=(1,2,3,\ldots,j,0,0,\ldots,0)$. Then $w$ is covexillary, with $\lambda(w)=(j,j-1,\ldots,3,2,1)$. For example, if $j=4$ then
$w=7,11,6,10,5,9,4,8,3,2,1$. By our assumption, $\kappa(id,w)=w$. Hence ${\sf RRW}(\kappa(id,w))$ is the staircase $\lambda(w)$
where column $c$ from the left is filled by $(c-1)$'s. In our example,
\[{\sf RRW}(\kappa(id,w))=\tableau{0 \\ 0 & 1\\ 0 & 1 & 2 \\ 0 & 1 & 2 & 3}.\]
Hence, Theorem~\ref{thm:formula} asserts that ${\rm Reg}(R_{id,w})=(j-1)+(j-2)+\ldots+2+1={j\choose 2}$. Now, if $n=3j$ or $n=3j+1$,
use the same construction as for $n=3j-1$, except that ${\sf code}(w)$ will have an additional $0$ or $0,0$ postpended, respectively.
In those two cases, the same analysis implies ${\rm Reg}(R_{id,w})={j\choose 2}$. Hence $\overline{{\rm maxReg}(n)}=\Omega(n^2)$ follows.

For the upper bound, since $w\in {\mathfrak S}_n$, $\lambda(\kappa(v,w))\subseteq n\times n$ and ${\sf RRW}(\kappa(v,w))$ only uses
labels $k\in [n]$. For each such $k$, the inner sum  of (\ref{eqn:theform}) contributes $\leq n$. Hence 
${\rm Reg}(R_{v,w})\leq n^2$. Therefore, $\overline{{\rm maxReg}(n)}=O(n^2)$, as required.
\end{proof}

\begin{corollary}
Conjecture~\ref{conj:3} implies Conjecture~\ref{conj:maxReg}.
\end{corollary}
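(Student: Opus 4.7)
The plan is to combine the lower bound already established in Proposition~\ref{prop:covexmax} with a simple bookkeeping upper bound extracted from Conjecture~\ref{conj:3}. The statement is an asymptotic one, so both bounds need to be $\Theta(n^2)$, and we may split the argument accordingly.

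For the lower bound, I would appeal directly to Proposition~\ref{prop:covexmax}. Since covexillary permutations form a subfamily of ${\mathfrak S}_n$, the inequality $\overline{{\rm maxReg}(n)}\leq {\rm maxReg}(n)$ is immediate from the definitions, so $\overline{{\rm maxReg}(n)}=\Omega(n^2)$ gives ${\rm maxReg}(n)=\Omega(n^2)$ unconditionally. No use of Conjecture~\ref{conj:3} is needed at this step; the covexillary staircase construction from Proposition~\ref{prop:covexmax} already certifies growth of order $n^2$.

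For the upper bound I would invoke Conjecture~\ref{conj:3}. For any $v\leq w$ in ${\mathfrak S}_n$ one has the trivial bounds $\ell(v)\geq 0$ and $\ell(w)\leq \binom{n}{2}$, so
\[
{\rm Reg}(R_{v,w})\leq \frac{\ell(w)-\ell(v)-1}{2}\leq \frac{1}{2}\binom{n}{2}=\frac{n(n-1)}{4}.
\]
Taking the maximum over all pairs $v\leq w$ in ${\mathfrak S}_n$ gives ${\rm maxReg}(n)\leq \frac{n(n-1)}{4}$, hence ${\rm maxReg}(n)=O(n^2)$. Combining with the previous paragraph yields ${\rm maxReg}(n)=\Theta(n^2)$, which is Conjecture~\ref{conj:maxReg}.

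The whole argument is essentially a one-line deduction, and there is no real obstacle: the only input beyond Conjecture~\ref{conj:3} is the $\Omega(n^2)$ lower bound, which is unconditional. The only place where one might want to be careful is in phrasing the proof so that it is clear the lower bound does not itself require Conjecture~\ref{conj:3}; otherwise the implication would be circular.
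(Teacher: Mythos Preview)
Your proof is correct and follows essentially the same approach as the paper: the lower bound comes unconditionally from Proposition~\ref{prop:covexmax}, and the upper bound from Conjecture~\ref{conj:3} via the trivial estimate $\ell(w)-\ell(v)\leq \binom{n}{2}$. Your constant in the upper bound is even a bit sharper than the paper's (you use $\frac{1}{2}\binom{n}{2}$ rather than $\binom{n}{2}$), but this is immaterial for the $\Theta(n^2)$ conclusion.
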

\begin{proof}
The lower bound of Conjecture~\ref{conj:maxReg} is immediate from Proposition~\ref{prop:covexmax}. If Conjecture~\ref{conj:3} holds, then
${\rm Reg}(R_{v,w})\leq \frac{\ell(w)-\ell(v)-1}{2}\leq \ell(w_0)={n\choose 2}$.
\end{proof}

Sometimes, $I_{v,w}$ is homogeneous with respect to the standard grading; see \cite{WY:Amer}
and the references therein. In those cases, trivially, ${I}_{v,w}'=I_{v,w}$ and Cohen-Macaulayness of $I_{v,w}$ and Conjecture~\ref{conj:1} is automatic. As argued in \cite{LiYong2}, the covexillary case is interesting precisely because 
${I}_{v,w}'=I_{v,w}$ need not hold in general (as in the case of our running example).

It is also natural to expect that our regularity conjectures are true for other Lie types. We remark that in the minuscule case studied by \cite{GK}, it is again true that the Schubert varieties admit a dilation action of ${\mathbb C}^*$ and hence the analogue
of Conjecture~\ref{conj:1} holds for a similar reason as in the previous paragraph. This problem should be in reach:

\begin{problem}
Determine the regularity of tangent cones of Schubert varieties for minuscule $G/P$.
\end{problem}

We also mention that the \emph{banner permutations} of Z.~Hamaker-O.~Pechenik-A.~Weigandt \cite{HPW} extend the vexillary permutations and
have a description of the Gr\"obner basis (also, see a further extension by P.~Klein~\cite{Klein}). It would therefore be interesting to see if the results of this paper (or of \cite{LiYong1,LiYong2})
extend to that setting.

With regards to Theorem~\ref{thm:formula}, one can use any rule that computes ${\rm deg}({\mathfrak G}_u)$. Another rule applicable to arbitrary $u\in {\mathfrak S}_n$ has been found by O.~Pechenik-D.~Speyer-A.~Weigandt
\cite{PSW}. On the one hand, the tableau rule of 
\cite{RRW} is fitting with the covexillary combinatorics we use. On the other hand, one wonders if that general rule can be adapted
to compute ${\sf Reg}(R_{u,v})$? We also remark that both of these formulas can be regarded as solving a special case of our
regularity problem; see \cite[Corollary 2.6]{WY:Amer} and its proof.

\section*{Acknowledgements}
We thank Daniel Erman, Shiliang Gao, Cao Huy Linh, Jenna Rajchgot, Hal Schenck, Colleen Robichaux, Anna Weigandt, and Alexander Woo for helpful communications. We are grateful to Allen Knutson, Li Li, Ezra Miller and Alexander Woo for joint work on which
this piece is based. We also thank the organizers of ``Singularities of the Midwest (online edition)'' and ``Recent Developments in Gr\"obner Geometry''; these notes are based on the talks given at those Summer '21 events. We made use of Macaulay2 in our investigations.
AY was partially supported by a Simons Collaboration Grant, an NSF RTG 1937241 in Combinatorics, and an appointment at the UIUC Center for Advanced Study.

\end{document}